\newcommand\NN{\mathbb{N}}
\newcommand\ZZ{\mathbb{Z}}
\DeclareMathOperator\dist{dist}
\newtheorem{theorem}{Theorem}[section]
\newtheorem{corollary}[theorem]{Corollary}
\newtheorem{remark}[theorem]{Remark}
\newtheorem{proposition}[theorem]{Proposition}
\numberwithin{equation}{section}
\newcommand{\address}{Address: Department of Mathematics, University of North Texas, 1155 Union Circle \#311430, Denton, TX 76203-5017, USA; E-mail: NathanDalaklis@my.unt.edu, kiko.kawamura@unt.edu,TobeyMathis@my.unt.edu, MichalisPaizanis@my.unt.edu}
\title{The partial derivative of Okamoto's functions with respect to the parameter}
\author{Nathan Dalaklis, Kiko Kawamura, Tobey Mathis, Michalis Paizanis \\University of North Texas \footnote{\address}}
\begin{document}

\maketitle

\begin{abstract}
The differentiability of the one parameter family of Okomoto's functions as functions of $x$ has been analyzed extensively since their introduction in 2005. As an analogue to a similar investigation, in this paper, we consider the partial derivative of Okomoto's functions with respect to the parameter $a$. We place a significant focus on $a = 1/3$ to describe the properties of a nowhere differentiable function $K(x)$ for which the set of points of infinite derivative produces an example of a measure zero set with Hausdorff dimension $1$.

\end{abstract}



\section{Introduction}

In 1957, De Rham~\cite{Rham} studied a unique continuous solution $L_a(x)$ of the following functional equation. 
\begin{equation} 
\label{FE:L_a}
        L_{a}(x)=
        \begin{cases}
                a L_{a}(2x), 
                        & \qquad 0 \leq x \leq \tfrac12, \\
                (1-a)L_{a}(2x-1)+a, 
                        & \qquad \tfrac12 \leq x \leq 1, \\
        \end{cases}
\end{equation}
where $0<a<1, a \neq 1/2$. Also known as Lebesgue's singular function, $L_a(x)$ is strictly increasing and has a derivative equal to zero almost everywhere. From \eqref{FE:L_a}, it is clear that $L_a$ is self-affine: The portions of the graph above the intervals $[0,1/2]$ and $[1/2,1]$ are affine images of the whole graph, contracted horizontally by $1/2$ and vertically by $a$ and $1-a$, respectively. 

\begin{figure}
\begin{center}
\includegraphics[height=5cm,width=5cm, bb=100 0 700 550]{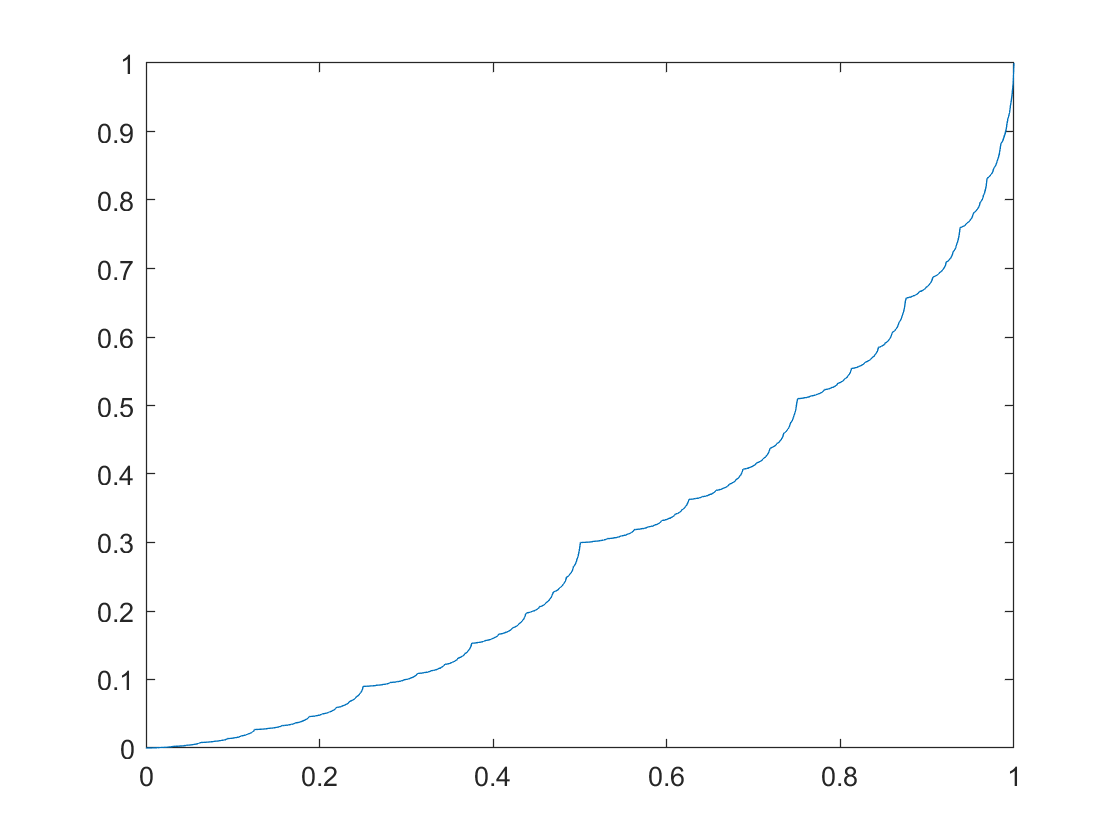} 
\qquad \qquad 
\includegraphics[height=5cm,width=5cm, bb=100 0 700 550]{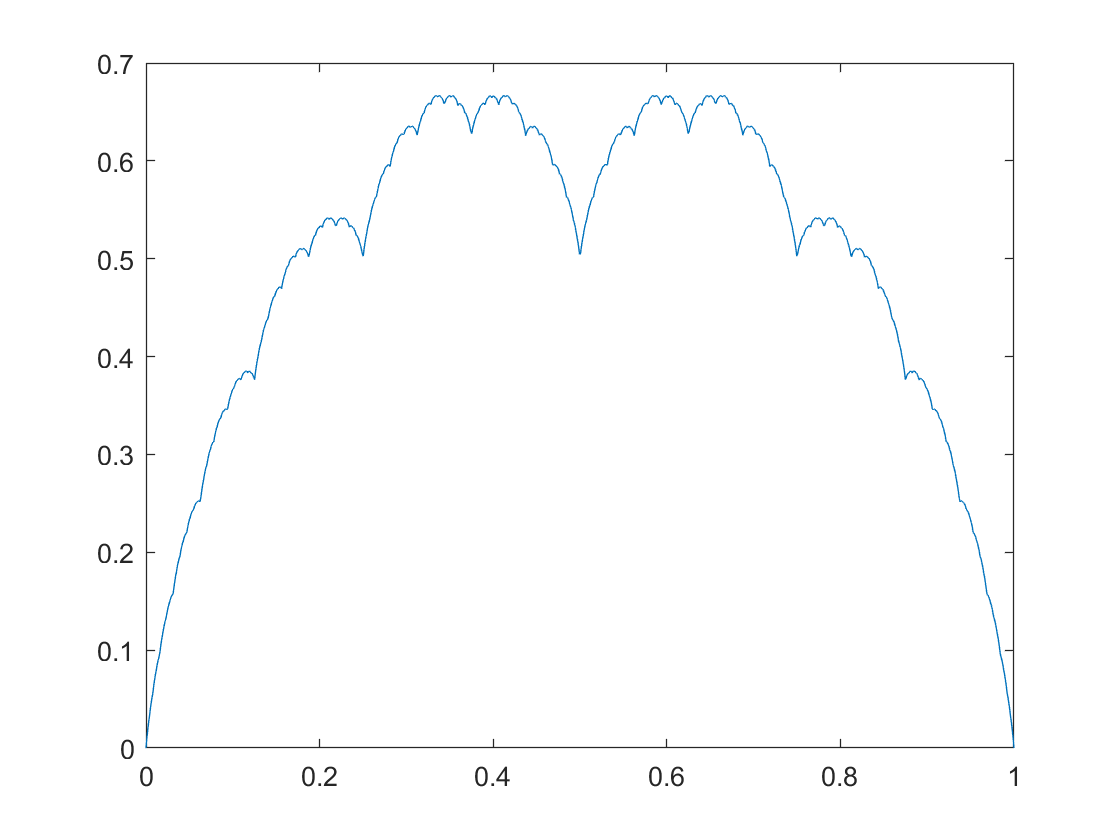}
\end{center}
\caption{Graph of $L_{a=1/3}(x)$ and $T(x)$} 
\label{figure0}
\end{figure}
Another famous continuous nowhere differentiable function, namely that of Takagi~\cite{Takagi} was introduced in 1903. It is defined by
\begin{equation}
\label{expression-takagi}
T(x)=\sum_{n=0}^\infty \frac{1}{2^n}\phi(2^n x),
\end{equation}
where $\phi(x)=\dist(x,\ZZ)$, the distance from $x$ to the nearest integer. Functions, like Takagi's function, that are not sufficiently smooth or regular have often been ignored as `pathological' and not worthy of study. Prior to 1984, there was no known relationship between the functional equation, $L_a(x)$, and Takagi's function, $T(x)$. However, that year Hata and Yamaguti~\cite{Hata-Yamaguti2} showed the following beautiful relationship between $L_a(x)$ and $T(x)$:
\begin{equation}
    \left.\frac{\partial L_{a}(x)}{\partial  a}\right|_{a=1/2}=2T(x).
\end{equation}

\noindent A generalization of this result of Hata and Yamaguti was considered by Sekiguchi and Shiota~\cite{Shiota}. They computed the $k$-th partial derivative of $L_{a}(x)$ with respect to the real parameter $a$ and applied it to digital sums problems~\cite{Okada-Sekiguchi-Shiota}. Also, Tasaki, Antoniou and Suchanecki pointed out applications of Hata and Yamaguti's results in physics ~\cite{Tasaki}. Later, Kawamura expanded Hata and Yamaguti's results to classify all self-similar sets determined by two contractions in the plane~\cite{Kawamura}. Note that both Lebesgue's singular function and Takagi's nowhere differentiable function are expressed by binary expansions of $x$ on the interval $[0,1]$. 

\medskip\newpage

In 2005, H. Okamoto \cite{Okamoto} introduced another one-parameter family of self-affine functions $\{F_a: 0<a<1\}$, which are expressed by ternary expansions of $x$ on the interval $[0,1]$. He defined $F_a$ as follows. Let $f_0(x)=x$, and inductively, for $n=0,1,2,\dots$, let $f_{n+1}$ be the unique continuous function which is linear on each interval $[j/3^{n+1},(j+1)/3^{n+1}]$ with $j\in\ZZ$ and satisfies, for $k=0,1,\dots,3^n-1$, the equations
\begin{gather*}
f_{n+1}(k/3^n)=f_n(k/3^n), \qquad f_{n+1}\big((k+1)/3^n\big)=f_n\big((k+1)/3^n\big),\\
f_{n+1}\big((3k+1)/3^{n+1}\big)=f_n(k/3^n)+a\left[f_n\big((k+1)/3^n\big)-f_n(k/3^n)\right],\\
f_{n+1}\big((3k+2)/3^{n+1}\big)=f_n(k/3^n)+(1-a)\left[f_n\big((k+1)/3^n\big)-f_n(k/3^n)\right].
\end{gather*}
The sequence $(f_n)$ defined above converges uniformly on $[0,1]$. Let $F_a:=\lim_{n\to\infty}f_n$, then $F_a$ is a continuous function from the unit interval $[0,1]$ onto itself.  By changing the parameter $a$, one can produce some interesting examples: Perkins' nowhere differentiable function~\cite{Perkins} (when $a=5/6$), Bourbaki-Katsuura's function ~\cite[p.~35, Problem 1-2]{Bourbaki},~\cite{Katsuura} (when $a=2/3$) and Cantor's Devil's staircase function (when $a=1/2$). Okamoto and Wunsch~\cite{OkaWunsch} proved that $F_a$ is singular when $0<a\leq 1/2$ and $a\neq 1/3$. 
\begin{figure}
\begin{center}
\begin{picture}(360,150)(0,15)
\put(30,20){\line(1,0){126}}
\put(30,20){\line(0,1){126}}
\put(30,146){\line(1,0){126}}
\put(156,20){\line(0,1){126}}
\put(27,15){\makebox(0,0)[tl]{$0$}}
\put(72,18){\line(0,1){4}}
\put(62,15){\makebox(0,0)[tl]{$1/3$}}
\put(114,18){\line(0,1){4}}
\put(104,15){\makebox(0,0)[tl]{$2/3$}}
\put(154,15){\makebox(0,0)[tl]{$1$}}
\put(17,25){\makebox(0,0)[tl]{$0$}}
\put(28,104){\line(1,0){4}}
\put(16,107){\makebox(0,0)[tl]{$a$}}
\put(28,62){\line(1,0){4}}
\put(-2,67){\makebox(0,0)[tl]{$1-a$}}
\put(19,150){\makebox(0,0)[tl]{$1$}}
\put(30,20){\line(1,2){42}}
\put(72,104){\line(1,-1){42}}
\put(114,62){\line(1,2){42}}
\put(60,122){\makebox(0,0)[tl]{$f_1$}}
\thicklines
\thinlines
\put(230,20){\line(1,0){126}}
\put(230,20){\line(0,1){126}}
\put(230,146){\line(1,0){126}}
\put(356,20){\line(0,1){126}}
\put(227,15){\makebox(0,0)[tl]{$0$}}
\put(272,18){\line(0,1){4}}
\put(262,15){\makebox(0,0)[tl]{$1/3$}}
\put(314,18){\line(0,1){4}}
\put(304,15){\makebox(0,0)[tl]{$2/3$}}
\put(354,15){\makebox(0,0)[tl]{$1$}}
\put(217,25){\makebox(0,0)[tl]{$0$}}
\put(228,104){\line(1,0){4}}
\put(216,107){\makebox(0,0)[tl]{$a$}}
\put(228,62){\line(1,0){4}}
\put(198,67){\makebox(0,0)[tl]{$1-a$}}
\put(219,150){\makebox(0,0)[tl]{$1$}}
\put(230,20){\line(1,4){14}}
\put(244,76){\line(1,-2){14}}
\put(258,48){\line(1,4){14}}
\put(272,104){\line(1,-2){14}}
\put(286,76){\line(1,1){14}}
\put(300,90){\line(1,-2){14}}
\put(314,62){\line(1,4){14}}
\put(328,118){\line(1,-2){14}}
\put(342,90){\line(1,4){14}}
\put(260,122){\makebox(0,0)[tl]{$f_2$}}
\thicklines
\end{picture}
\end{center}
\caption{The first two steps in the construction of $F_a$}
\label{fig:construction}
\end{figure}
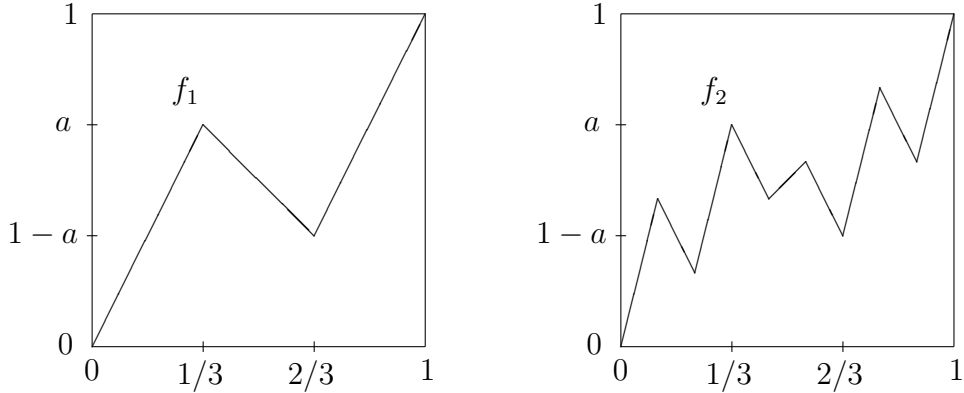

Let $a_0\approx .5592$ be the unique real root of $54a^3-27a^2=1$. Okamoto \cite{Okamoto} showed that 
\begin{enumerate}
\item $F_a$ is nowhere differentiable if $2/3\leq a<1$;
\item $F_a$ is nondifferentiable at almost every $x\in[0,1]$ but differentiable at uncountably many points if $a_0<a<2/3$; and 
\item $F_a$ is differentiable almost everywhere but nondifferentiable at uncountably many points if $0<a<a_0$. 
\end{enumerate}
Okamoto left the case $a=a_0$ open, but Kobayashi~\cite{Kobayashi} later showed, using the law of the iterated logarithm, that $F_{a_0}$ is nondifferentiable almost everywhere. Later, Allaart~\cite{Allaart} investigated the set of points at which $F_a$ has an {\em infinite} derivative and showed that this set has an interesting connection with $\beta$-expansions.

The graph of $F_a$ is self-affine: The portions of the graph above the intervals $[0,1/3]$, $[1/3,2/3]$ and $[2/3,1]$ are affine images of the whole graph, contracted horizontally by $1/3$ and vertically by $a$, $|2a-1|$ and $a$, respectively. The middle one is also reflected vertically when $a>1/2$. In fact, $F_a(x)$ satisfies the following functional equation:
\begin{equation}
\label{FE:F_a}
        F_a(x)=
        \begin{cases}
                aF_a(3x),
                  & \qquad 0 \leq x \leq 1/3, \\
                (1-2a)F_a(3x-1)+a,
                  & \qquad 1/3 \leq x \leq 2/3, \\
								aF_a(3x-2)+(1-a),
								  & \qquad 2/3 \leq x \leq 1, \\
       \end{cases}
\end{equation}
where $0<a<1$. 
As a result, the box-counting dimension of the graph of $F_a$ follows from Example 11.4 in \cite{Falconer}. It is $1$ if $a\leq 1/2$, and $1+\log_3(4a-1)$ if $a>1/2$. 

\medskip

Notice that $F_{1/3}(x)=x$ and $L_{1/2}(x)=x$. The main purpose of this article is to investigate the partial derivative of $F_a(x)$ as an analogy of Hata and Yamaguti's result. Let 
\begin{equation*}
K(x):=\left.\frac{\partial F_{a}(x)}{\partial  a}\right|_{a=1/3}.
\end{equation*}

\medskip
\noindent In Section $2$, we derive a simpler expression for $K(x)$:
\begin{equation}
\label{expression-K2}
K(x)=\sum_{n=0}^{\infty}\frac{1}{3^n}\Phi(3^{n}x),
\end{equation}
where 
\begin{equation*}
\Phi(x):=
\begin{cases}
	3x, & \qquad 0 \leq x \leq 1/3, \\
  3(1-2x), & \qquad 1/3 \leq x \leq 2/3, \\
	3(x-1), & \qquad 2/3 \leq x \leq 1.
\end{cases}
\end{equation*}

\noindent This particular expression helps us prove that $K(x)$ is continuous but nowhwere differentiable, just like Takagi's function. 

\medskip

In Section $3$, we give a complete description of the set of points at which $K(x)$ has an infinite derivative in terms of the ternary expansion of $x$. That is, for $x\in[0,1]$, the {\em ternary expansion} of $x$ is the sequence $\varepsilon_1,\varepsilon_2,\dots$ defined by 
$$x=\sum_{k=1}^\infty \varepsilon_k/3^k, \mbox{where } \varepsilon_k\in\{0,1,2\}$$ 
for all $k$. If $x$ has two ternary expansions we take the one ending in all $0$'s, except when $x=1$, in which case we take the expansion ending in all $2$'s. 
Then, for $n\in\NN$, let $I_1(n)$ be the number of 1's occurring in the first $n$ ternary digits of $x$. That is,
\begin{equation*}
I_1(n):=\#\{j \leq n: \varepsilon_j=1\}.
\end{equation*}
The definition of which allows us to state our main Theorem.
\begin{theorem}
\label{main-theorem}
$K^{'}(x)=\pm \infty$ if and only if $x$ satisfies the following condition: 
\begin{equation}
\label{eq:iff condition}
n-3I_1(n) \to \pm \infty \qquad \mbox{ as } n\to \infty.
\end{equation}
\end{theorem}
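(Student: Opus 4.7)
The strategy is the classical one for Takagi-type series, carried out in base three. Write $K_N(x) := \sum_{n=0}^{N-1} 3^{-n}\Phi(3^n x)$ and set $s_N(x) := 3(N - 3 I_1(N))$. The $n$-th summand $3^{-n}\Phi(3^n\cdot)$ is piecewise linear with slope $+3$ on subintervals where the $(n+1)$-th ternary digit equals $0$ or $2$, and slope $-6$ where it equals $1$; summing, the slope of $K_N$ on the level-$N$ ternary interval containing $x$ is $3(N - I_1(N)) - 6\, I_1(N) = s_N(x)$. Because $\Phi$ (extended periodically) vanishes at integers, $\Phi(3^n x_N) = 0$ for all $n \geq N$ whenever $x_N := \lfloor 3^N x\rfloor/3^N$, so $K$ and $K_N$ agree at the endpoints $x_N$ and $y_N := x_N + 3^{-N}$. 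In particular,
\[
\frac{K(y_N) - K(x_N)}{y_N - x_N} = s_N(x).
\]

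Next I would derive the secant estimate that powers both implications. Since $|\Phi| \leq 1$, the tail bound $|K(y) - K(x) - (K_N(y) - K_N(x))| \leq 3^{1-N}$ holds uniformly. For $y$ with $3^{-N-1} \leq |y - x| < 3^{-N}$, the point $y$ lies either in the same level-$N$ interval as $x$ or in an immediately adjacent one; decomposing $K_N(y) - K_N(x)$ along its (at most two) linear pieces and dividing by $y - x$ yields
\[
\frac{K(y) - K(x)}{y - x} = \lambda\, s_N(x) + (1 - \lambda)\, s_N(y) + E, \qquad \lambda \in [0,1],\ |E| \leq 9.
\]
A short analysis of the ternary addition $j \mapsto j+1$, including the carry cases, shows $|s_N(y) - s_N(x)| \leq 9$ whenever $y$ is in an adjacent interval.

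Both directions of the theorem now follow. \emph{If} $n - 3 I_1(n) \to +\infty$ (the $-\infty$ case is symmetric), then $s_N(x) \to +\infty$ and consequently $s_N(y) \to +\infty$ as $y \to x$, so the displayed secant formula forces $(K(y) - K(x))/(y-x) \to +\infty$; hence $K'(x) = +\infty$. \emph{Only if:} assume $x$ is not a ternary rational (the ternary-rational case is handled by one-sided versions of the same argument) and that $n - 3 I_1(n) \not\to +\infty$, so $s_{n_k}(x) \leq M$ along some subsequence $n_k \to \infty$. The identity
\[
s_{n_k}(x) = \alpha_{n_k}\, \frac{K(x) - K(x_{n_k})}{x - x_{n_k}} + (1 - \alpha_{n_k})\, \frac{K(y_{n_k}) - K(x)}{y_{n_k} - x},
\]
with $\alpha_{n_k} = (x - x_{n_k})/(y_{n_k} - x_{n_k}) \in (0,1)$, forces $\min(L_{n_k}, R_{n_k}) \leq M$, where $L_{n_k}$ and $R_{n_k}$ are the two one-sided secants. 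Passing to a sub-subsequence, one of these stays bounded above, so the corresponding one-sided derivative of $K$ at $x$ is not $+\infty$ and hence $K'(x) \neq +\infty$; the $-\infty$ case is symmetric. The main technical hurdle is the secant estimate in paragraph two, particularly the treatment of $y$ in an adjacent level-$N$ interval together with the carry analysis; everything else is bookkeeping.
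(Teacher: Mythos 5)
Your proposal is correct and follows essentially the same route as the paper: both arguments show that the difference quotient lies within a bounded additive error of $3\bigl(n-3I_1(n)\bigr)$ by exploiting that the level-$n$ partial sums of \eqref{expression-K2} are piecewise linear with slope $+3$ or $-6$ according to the ternary digits, and by bounding the tail of the series by a constant ($9$) after dividing by $|y-x|\geq 3^{-N-1}$. The only differences are organizational --- you bracket $x$ by level-$N$ ternary intervals and control the carry via $|s_N(y)-s_N(x)|\leq 9$, whereas the paper compares the digit strings of $x$ and $x+h$ directly and splits the sum into $\Sigma_1,\dots,\Sigma_4$ --- and both treat the left-hand derivative by the same symmetry/one-sided considerations.
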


\noindent The infinite derivatives of Takagi's function were first discussed by Begle and Ayres~\cite{Begle-Ayres} in 1936, and were finally characterized fully by Allaart and Kawamura \cite{AK} and Kr\"uppel \cite{Kruppel} in 2010. Compared with this result, our characterization of the infinite derivatives of $K(x)$  is surprisingly simple. As a Corollary of this Theorem, we also obtain yet another example of a measure zero set with Hausdorff dimension $1$.

\medskip

\section{Nowhere differentiability of $K(x)$} \label{sec:prelim}

Kobayashi mentioned that $F_a(x)$ has the following representation
\begin{equation}
\label{eq:kobayashi}
F_a(x)=\sum_{n=1}^{\infty}\prod_{l=1}^{n-1}p(\varepsilon_l(x))q(\varepsilon_n(x)),
\end{equation}
where 
$$p(0)=a,  \qquad p(1)=1-2a, \qquad p(2)=a,$$ 
$$q(0)=0, \qquad q(1)=a, \qquad q(2)=1-a.$$

\noindent Although Kobayashi did not give a proof, it is easy to see that \eqref{eq:kobayashi} satisfies the functional equation \eqref{FE:F_a}. Therefore, it is clear that $F_a(x)$ is an analytic function with respect to $a \in (0,1)$. 

Define the partial derivative of $F_a(x)$ with respect to $a$ as follows:
\begin{equation*}
\frac{\partial F_{a}(x)}{\partial a}:=\lim_{h \to 0}\frac{F_{a+h}(x)-F_a(x)}{h},
\end{equation*}
provided the limit exists. A simple calculation gives
\begin{equation*}
\frac{\partial F_{a}(x)}{\partial a}=s(\varepsilon_1(x))+\sum_{n=1}^{\infty} a^{n-I_1(1, n)}(1-2a)^{I_1(1, n)}q(\varepsilon_{n+1}(x)), 
\end{equation*} 
where $s(0)=0, s(1)=1$ and $s(2)=-1$. In particular, we have 
\begin{equation}
\label{expression-K1}
K(x)=\left.\frac{\partial F_{a}(x)}{\partial  a}\right|_{a=1/3}=\sum_{n=0}^{\infty}\frac{1}{3^n}\left\{s(\varepsilon_{n+1}(x))+(n-3I_1(1, n))\varepsilon_{n+1}(x)\right\}. 
\end{equation}

\noindent The graph of $K(x)$ is shown in Figure~\ref{figure1}. Note that $F_{1/3}(x)=x$.

\begin{figure}
\begin{center}
\includegraphics[height=5cm,width=5cm, bb=100 0 700 550]{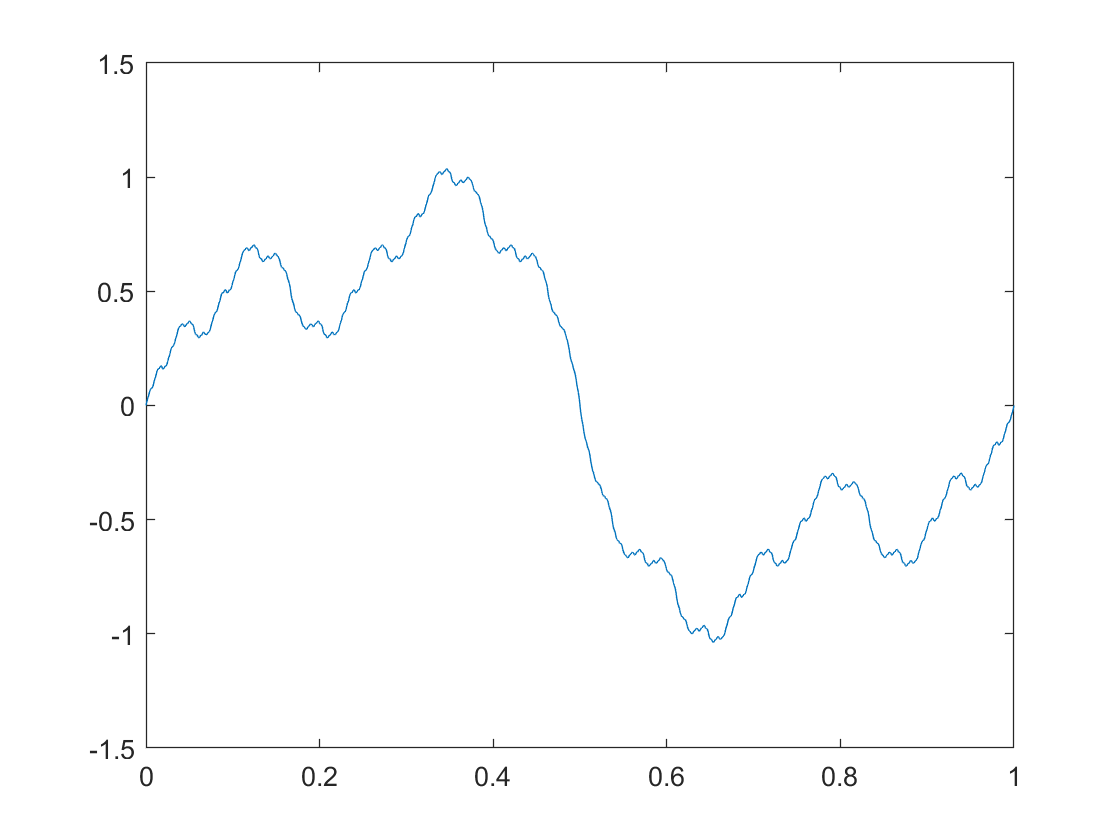}
\end{center}
\caption{Graph of $K(x):=\left.\frac{\partial F_{a}(x)}{\partial  a}\right|_{a=1/3}$} 
\label{figure1}
\end{figure}
\medskip

Another expression of $K(x)$ can be derived from \eqref{FE:F_a}. It is easy to check that $K(x)$ satisfies the following functional equation. 
\begin{equation}
\label{FE:K}
        K(x)=
        \begin{cases}
                K(3x)/3+3x,
                  & \qquad 0 \leq x \leq 1/3, \\
                K(3x-1)/3+3(1-2x),
                  & \qquad 1/3 \leq x \leq 2/3, \\
								K(3x-2)/3+3(x-1),
								  & \qquad 2/3 \leq x \leq 1, 
       \end{cases}
\end{equation}
where $K(0)=0=K(1)$. Here, we recall the following theorem of Yamaguti and Hata~\cite{Yamaguti-Hata1}:
\begin{theorem}[Yamaguti-Hata, 1983] \label{thm:Yamaguti-Hata}
Consider the functional equation: 
\begin{equation}
\label{eq:Yamagati-FE}
F(t,x)-tF(t, \psi(x))=g(x), 
\end{equation}
where $\psi(x)$ is a given one-dimensional dynamical system on $[0,1] \to [0,1]$ and $g(x)$ is a bounded measurable function on $[0,1]$. 
Then, there exist a unique bounded solution of \eqref{eq:Yamagati-FE} such that 
\begin{equation}
\label{FE: general}
F(t,x)=\sum_{n=0}^{\infty}t^n g(\psi^{(n)}(x)),
\end{equation}
where $\psi^{(n)}(x)$ is the $n-$fold iteration of $\psi$. 
\end{theorem}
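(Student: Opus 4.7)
The plan is to handle existence and uniqueness separately, with the series formula \eqref{FE: general} serving as the candidate solution. Implicit in the statement is the assumption $|t|<1$ (otherwise the series need not converge and uniqueness can fail); I would flag this up front.

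For \textbf{existence}, I would set $F(t,x):=\sum_{n=0}^{\infty} t^{n} g(\psi^{(n)}(x))$. Since $|g|\le M$ for some $M$ and $\psi$ maps $[0,1]$ into itself, each summand is bounded in absolute value by $M|t|^{n}$, yielding absolute and uniform convergence in $x$ with $\sup_{x}|F(t,x)|\le M/(1-|t|)$. To check that this $F$ satisfies \eqref{eq:Yamagati-FE}, I would compute
$$tF(t,\psi(x))=\sum_{n=0}^{\infty}t^{n+1}g(\psi^{(n+1)}(x))=\sum_{m=1}^{\infty}t^{m}g(\psi^{(m)}(x)),$$
so the difference $F(t,x)-tF(t,\psi(x))$ collapses to the $n=0$ term $g(\psi^{(0)}(x))=g(x)$, as desired.

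For \textbf{uniqueness}, I would take two bounded solutions $F_{1},F_{2}$ of \eqref{eq:Yamagati-FE} and set $H:=F_{1}-F_{2}$. Subtracting the two functional equations gives the homogeneous relation $H(t,x)=tH(t,\psi(x))$. Iterating produces $H(t,x)=t^{n}H(t,\psi^{(n)}(x))$ for every $n\ge 1$, and since $H$ is bounded in $x$, the right-hand side is at most $|t|^{n}\sup_{y}|H(t,y)|$, which tends to $0$ as $n\to\infty$. Hence $H\equiv 0$, proving uniqueness.

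The main \emph{obstacle} is conceptual rather than technical: one must notice that both the convergence of the series and the effectiveness of the telescoping uniqueness argument require $|t|<1$, a hypothesis suppressed in the theorem statement but needed for the conclusion. Beyond this, no regularity of $\psi$ is used other than its being a self-map of $[0,1]$, and measurability of $g$ transfers to each $g\circ\psi^{(n)}$ provided $\psi$ is measurable, which is natural for any one-dimensional dynamical system.
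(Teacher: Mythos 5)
Your proof is correct and complete: the paper itself gives no proof of this theorem, citing it directly from Yamaguti--Hata (1983), so there is nothing internal to compare against, but your argument (uniform convergence of the geometric-type series for existence, telescoping cancellation to verify the functional equation, and iteration of the homogeneous relation $H(t,x)=tH(t,\psi(x))$ plus boundedness for uniqueness) is exactly the standard one. Your observation that $|t|<1$ is a suppressed but necessary hypothesis is apt; the paper only ever applies the theorem with $t=1/3$, so nothing downstream is affected.
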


\noindent Notice that the functional equation \eqref{FE:K} is a special case of \eqref{eq:Yamagati-FE}. More precisely, let $t=1/3$, and let
\begin{equation*}
\psi(x):=
\begin{cases}
	3x, & \qquad 0 \leq x \leq 1/3, \\
  3x-1, & \qquad 1/3 \leq x \leq 2/3, \\
	3x-2, & \qquad 2/3 \leq x \leq 1,
\end{cases}
\end{equation*}
and 
\begin{equation*}
g(x):=\Phi(x):=
\begin{cases}
	3x, & \qquad 0 \leq x \leq 1/3, \\
  3(1-2x), & \qquad 1/3 \leq x \leq 2/3, \\
	3(x-1), & \qquad 2/3 \leq x \leq 1.
\end{cases}
\end{equation*}

\noindent Then Theorem \ref{thm:Yamaguti-Hata} gives another expression for $K(x)$:
\begin{equation*}
K(x)=\sum_{n=0}^{\infty}\frac{1}{3^n}\Phi(3^{n}x),
\end{equation*}
where we extend $\Phi$ to all of $\mathbb{R}$ by $\Phi(x+1)=\Phi(x)$. Notice that this expression is different from \eqref{expression-K1}.

\begin{proposition}
\label{prop:K is odd}
$K(x)$ is an odd function.
\end{proposition}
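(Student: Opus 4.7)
The natural approach is to work from the series representation
\[
K(x)=\sum_{n=0}^{\infty}\frac{1}{3^{n}}\Phi(3^{n}x)
\]
established just above the proposition, since saying that $K$ is odd (on $[0,1]$) should mean $K(1-x)=-K(x)$, i.e.\ antisymmetry about the midpoint $1/2$, which is exactly the shape suggested by Figure~\ref{figure1}. The plan is to reduce the claim about $K$ to the analogous symmetry of the generating function $\Phi$ and then ride it through the series term-by-term.

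First I would establish the symmetry $\Phi(1-x)=-\Phi(x)$ for $x\in[0,1]$ by a direct three-case check against the piecewise definition of $\Phi$: if $x\in[0,1/3]$ then $1-x\in[2/3,1]$ and $\Phi(1-x)=3((1-x)-1)=-3x=-\Phi(x)$; if $x\in[1/3,2/3]$ then $1-x\in[1/3,2/3]$ and $\Phi(1-x)=3(1-2(1-x))=6x-3=-(3-6x)=-\Phi(x)$; and the third case follows by symmetry with the first. Since $\Phi$ has been extended to $\RR$ by the period-$1$ relation $\Phi(y+1)=\Phi(y)$, this gives $\Phi(-y)=\Phi(1-y)=-\Phi(y)$ for every $y\in\RR$.

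Next I would apply this symmetry inside the series. For any $x\in[0,1]$ and any $n\geq 0$, the number $3^{n}$ is an integer, so by periodicity
\[
\Phi\bigl(3^{n}(1-x)\bigr)=\Phi\bigl(3^{n}-3^{n}x\bigr)=\Phi(-3^{n}x)=-\Phi(3^{n}x).
\]
Summing with the weights $1/3^{n}$, which are absolutely convergent (so rearrangement of signs is harmless), yields
\[
K(1-x)=\sum_{n=0}^{\infty}\frac{1}{3^{n}}\Phi\bigl(3^{n}(1-x)\bigr)=-\sum_{n=0}^{\infty}\frac{1}{3^{n}}\Phi(3^{n}x)=-K(x),
\]
which is the desired oddness.

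There is essentially no obstacle here: the only nontrivial ingredient is the antisymmetry of $\Phi$ about $1/2$, and that is a routine piecewise check. The small point worth being explicit about is that the periodic extension of $\Phi$ converts ``antisymmetry about $1/2$'' into ``oddness about every integer,'' which is exactly what lets us pass the reflection $x\mapsto 1-x$ through the dilations $x\mapsto 3^{n}x$ term by term.
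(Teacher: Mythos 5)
Your proof is correct and follows essentially the same route as the paper: both establish the antisymmetry $\Phi(1-x)=-\Phi(x)$ together with the periodicity $\Phi(x+1)=\Phi(x)$, and then pass the reflection through each term of the series to conclude $K(1-x)=-K(x)$. You simply spell out the piecewise verification and the periodicity step more explicitly than the paper does.
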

\begin{proof}
Notice that $\Phi(x)=-\Phi(1-x)=\Phi(x+1)$. Thus, 
\begin{align*}
K(x)&=\sum_{n=0}^{\infty}\left(\frac13\right)^{n}\Phi(3^{n}x)=-\sum_{n=0}^{\infty}\left(\frac13\right)^{n}\Phi(1-3^{n}x) \\
		&=-\sum_{n=0}^{\infty}\left(\frac13\right)^{n}\Phi(3^{n}(1-x))=-K(1-x).
\end{align*}
\end{proof}

\begin{remark}
{\rm
A unique bounded solution of another special case of \eqref{FE: general} is Takagi's function. Comparing \eqref{expression-takagi} with \eqref{expression-K2}, $K(x)$ is somewhat similar to Takagi's function $T(x)$. Notice that $T(x)$ is based on the binary expansion of $x$ while $K(x)$ is based on the ternary expansion of $x$. Furthermore, $T(x)$ is even, but $K(x)$ is an odd function.  
}
\end{remark}

\begin{theorem}
\label{th:no-finite derivative}
The function $K$ is continuous, but it does not possess a finite derivative at any point.
\end{theorem}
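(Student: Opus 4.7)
My plan has two parts: dispose of continuity with a uniform convergence argument, and then establish nowhere differentiability by computing secant slopes on the natural triadic intervals shrinking to $x$.

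For continuity, $\Phi$ is piecewise linear on $[0,1]$ with $\Phi(0)=\Phi(1)=0$, so its $1$-periodic extension is continuous on $\mathbb{R}$, and $|\Phi|\le 1$. The Weierstrass $M$-test applied to $\sum_{n\ge 0} 3^{-n}\Phi(3^n x)$ with the majorant $\sum 3^{-n}$ then gives uniform convergence of continuous functions, so $K$ is continuous.

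For nowhere differentiability, fix $x\in[0,1]$ with ternary expansion $(\varepsilon_j)$. For each $n$ set $x_n=\lfloor 3^n x\rfloor/3^n$ and $y_n=x_n+3^{-n}$ (switching to $y_n=x$, $x_n=x-3^{-n}$ if $x=1$). The key computation: for $m\ge n$, both $3^m x_n$ and $3^m y_n$ are integers, so $\Phi$ vanishes at both. For $m<n$, the points $3^m x_n$ and $3^m y_n$ reduce mod $1$ to $0.\varepsilon_{m+1}\cdots\varepsilon_n$ and that value plus $3^{m-n}\le 1/3$, and a short case check on $\varepsilon_{m+1}\in\{0,1,2\}$ shows this interval lies in a single linear piece of $\Phi$: the slope is $3$ when $\varepsilon_{m+1}\in\{0,2\}$ and $-6$ when $\varepsilon_{m+1}=1$. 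Summing $\Phi(3^m y_n)-\Phi(3^m x_n)$ over $0\le m<n$, the factors of $3^m$ and $3^{m-n}$ collapse, giving
$$\frac{K(y_n)-K(x_n)}{y_n-x_n}=3\bigl(n-3I_1(n)\bigr).$$

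To conclude, I invoke the standard secant-slope principle: if $K'(x)$ exists and is finite, then for any $x_n\le x\le y_n$ with $y_n-x_n\to 0$ the secant slopes converge to $K'(x)$ (a convex combination of the two one-sided difference quotients at $x$, with the edge case $x\in\{x_n,y_n\}$ reducing to a one-sided derivative). Hence $3(n-3I_1(n))$ must converge. But this is an integer-valued sequence whose value changes by $+3$ if $\varepsilon_n\ne 1$ and by $-6$ if $\varepsilon_n=1$; it is never eventually constant, and an integer sequence that never stabilizes cannot converge. This contradiction shows $K'(x)$ cannot exist finitely.

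The main bookkeeping obstacle is verifying that $[3^m x_n,3^m y_n]\pmod 1$ really lies inside a single linear piece of $\Phi$; this reduces to the observation that a step of length $3^{m-n}\le 1/3$ starting from a fractional part in $[j/3,\,(j+1)/3-3^{m-n}]$ (with $j=\varepsilon_{m+1}$) stays in $[j/3,(j+1)/3]$. Everything else is straightforward algebra, and the triadic-rational edge cases (including $x\in\{0,1\}$) are handled by using the appropriate one-sided choice of $(x_n,y_n)$ together with the corresponding ternary expansion, for which the formula above remains valid.
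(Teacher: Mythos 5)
Your proposal is correct and follows essentially the same route as the paper: continuity by the Weierstrass $M$-test, and nowhere differentiability via Billingsley's secant-slope argument on triadic intervals of length $3^{-n}$ straddling $x$, where each increment contributes slope $3$ or $-6$ so the partial sums (which you correctly identify as $3(n-3I_1(n))$) cannot converge. The only cosmetic difference is that you make the value of the secant slope explicit, which the paper defers to the proof of its main theorem.
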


\begin{proof}
First, define 
\begin{equation*}
K_n(x):=\sum_{k=0}^{n}\left(\frac13\right)^{k}\Phi(3^{k}x).
\end{equation*}
For each $n \in \NN$, it is clear that $K_n$ is continuous on $[0,1]$ and the sequence $(K_n)$ converges uniformly to $K$ since $|\Phi(3^{k}x)| \leq 1$. Thus, $K$ is continuous. 

Next, we modify the nowhere differentiability proof for Takagi's function by Billingsley~\cite{Billingsley}. Put $\Phi_k(x):=3^{-k}\Phi(3^k x)$ for $k=0,1,\dots$. Fix a point $x$, and, for each $n\in\NN$, let $u_n$ and $v_n$ be ternary rationals of order $n$ with $v_n-u_n=3^{-n}$ and $u_n\leq x<v_n$. Then
\begin{equation*}
\frac{K(v_n)-K(u_n)}{v_n-u_n}=\sum_{k=0}^{n-1}\frac{\Phi_k(v_n)-\Phi_k(u_n)}{v_n-u_n},
\end{equation*}
since $\Phi_k(u_n)=\Phi_k(v_n)=0$ for all $k\geq n$. But for $k<n$, $\Phi_k$ is linear on $[u_n,v_n]$ with slope $\Phi_k^+(x)$, the right-hand derivative of $\Phi_k$ at $x$. Thus,
\begin{equation*}
\frac{K(v_n)-K(u_n)}{v_n-u_n}=\sum_{k=0}^{n-1}\Phi_k^+(x).
\end{equation*}
Since $\Phi_k^+(x)=3$ or $-6$ for each $k$, this last sum cannot converge to a finite limit. Hence, $K$ does not have a finite derivative at $x$.
\end{proof}

\section{Improper infinite derivatives of $K(x)$}

Since $K(x)$ has no finite derivative at any point, it is natural to ask at which points $x \in [0,1]$ does $K(x)$ have an infinite derivative. These points are completely characterized by Theorem~\ref{main-theorem}. In order to prove our main Theorem, we use \eqref{expression-K2} as an expression for $K(x)$, and we define the right-hand and left-hand derivative of $K(x)$ by 
\begin{align*}
K_{+}^{'}(x):&=\lim_{h \to 0+}\frac{K(x+h)-K(x)}{h}, \\
K_{-}^{'}(x):&=\lim_{h \to 0-}\frac{K(x+h)-K(x)}{h},
\end{align*}
provided the limits exist.

\begin{proof}[Proof of Theorem~\ref{main-theorem}]
First we consider the right-hand derivative of $K(x)$. 
Let $x$ and $h$ be real numbers such that $0 \leq x <x+h<1$ and write 
\begin{equation*}
x=\sum_{k=1}^\infty \varepsilon_k/3^k, \qquad x+h=\sum_{k=1}^\infty \varepsilon_k'/3^k,
\end{equation*}
where $\varepsilon_k, \varepsilon_k' \in\{0,1,2\}$. As noted earlier, when $x$ is a triadic rational, there are two ternary expansions, but we choose the one which is eventually all zeros, except when $x=1$, in which case we take the expansion ending in all $2$'s. We adopt the same convention for $x+h$. 

\medskip

Let $p:=p(h) \in \NN$ such that $3^{-p} \leq h < 3^{-p+1}$. In other words, $p$ is the position of the first nonzero ternary digit of $h$. Let 
$$k_0:=\max \{k \in \NN: \varepsilon_{1}=\varepsilon_{1}',\varepsilon_{2}=\varepsilon_{2}', \dots \varepsilon_{k}=\varepsilon_{k}'\}.$$

\noindent Clearly, $0 \leq k_0 \leq p-1$. Observe that by the assumption for the expression of $x$, $k_0 \to \infty$ as $h \to 0+$. Set 
$$D_n(x,h):=\frac{\Phi(3^n(x+h))-\Phi(3^nx)}{3^nh}.$$
Notice that 
\begin{equation*}
3^nx=\sum_{k=1}^\infty \varepsilon_{n+k}/3^k, \qquad 3^{n}(x+h)=\sum_{k=1}^\infty \varepsilon_{n+k}'/3^k,
\end{equation*}
and $D_n(x,h)$ is the slope of the secant line between the two points $(3^{n}x, \Phi(3^{n}x))$ and $(3^{n}(x+h), \Phi(3^{n}(x+h)))$.

\noindent Then there are three cases to consider: (i) $k_0 \leq p-3$, (ii) $k_0=p-2$ and (iii) $k_0=p-1$.

(i) Assume first that $k_0 \leq p-3$. Then
\begin{equation*}
\frac{K(x+h)-K(x)}{h}=\Sigma_1+\Sigma_2+\Sigma_3+\Sigma_4,
\end{equation*}
where 
\begin{align*}
\Sigma_1:&=\sum_{n=0}^{k_0-1} D_n(x,h), \qquad \Sigma_2:=D_{k_0}(x,h), \\
\Sigma_3:&=\sum_{n=k_0+1}^{p-2} D_n(x,h), \qquad \Sigma_4:=\sum_{n=p-1}^{\infty} D_n(x,h). 
\end{align*}

\begin{itemize}
\item For $\Sigma_1$, since $\varepsilon_n=\varepsilon_n'$ for $1 \leq n \leq k_0$, 
\begin{equation*}
D_n(x,h)=3(-2)^{\varepsilon_{n+1}\!\!\!\!\mod 2} \mbox{ for } 0 \leq n \leq k_0-1. 
\end{equation*}

Define
\begin{align*}
I_i(a,b):&=\#\{j: a \leq j \leq b, \varepsilon_j=i\}, \\
f(a,b):=&f_x(a,b)=3I_0(a,b)-6I_1(a,b)+3I_2(a,b).
\end{align*}
Then we have 
\begin{equation*}
\Sigma_1=\sum_{n=0}^{k_0-1}D_n(x,h)=\sum_{n=1}^{k_0}3(-2)^{\varepsilon_{n}\!\!\!\!\mod 2}=f(1,k_0).
\end{equation*}

\item For $\Sigma_2:=D_{k_0}(x,h)$, since $\varepsilon_{k_0+1} \neq \varepsilon_{k_0+1}'$ and $D_{k_0}(x,h)$ is the slope of the secant line between the two points $(3^{k_0}x, \Phi(3^{k_0}x))$ and $(3^{k_0}(x+h), \Phi(3^{k_0}(x+h)))$, we have  
$$-6 \leq \Sigma_2 \leq 3.$$

\item For $\Sigma_3$, the assumption $k_0 \leq p-3$ implies that 
$$\varepsilon_n=2 \mbox{ and } \varepsilon_n'=0 \mbox{ for } k_0+2 \leq n \leq p-1.$$
For each $n$ with $k_0+2 \leq n \leq p-1$, there is an integer $m_n$ such that 
\begin{equation*}
3^nx \in [m_n-1/3, m_n) \mbox{ and } 3^n(x+h) \in [m_n, m_n+1/3).
\end{equation*}

Since $\Phi(x+1)=\Phi(x)$ and the $\Phi$ has the same slope of $3$ on both $[0, 1/3]$ and $[2/3, 1]$, we have 
\begin{equation*}
\Sigma_3=\sum_{n=k_0+1}^{p-2}D_n(x,h)=\sum_{n=k_0+2}^{p-1}3=3(p-k_0-2)=f(k_0+2,p-1).
\end{equation*}

\item For $\Sigma_4$, since $3^{-p} \leq h < 3^{-p+1}$ and $-1 \leq \Phi(x) \leq 1$, we may bound $\Sigma_4$ with a geometric series:
\begin{equation*}
 |\Sigma_4| \leq (2)3^p\sum_{n=p-1}^{\infty}\frac{1}{3^n}=9. 
\end{equation*}
\end{itemize}
\medskip

\noindent Therefore, if $k_0 \leq p-3$, we have 
\begin{equation*}
f(1,k_0)+f(k_0+2, p-1)-15 \leq \frac{K(x+h)-K(x)}{h} \leq f(1,k_0)+f(k_0+2, p-1)+12.
\end{equation*}
Since $-6 \leq f(k_0+1, k_0+1) \leq 3$, 
\begin{equation*}
f(1, p-1)-18 \leq \frac{K(x+h)-K(x)}{h} \leq f(1,p-1)+18.
\end{equation*}

\medskip

The other cases are much simpler. 

\noindent (ii) If $k_0=p-2$, notice that we may split them the sum in nearly the same way as in case (i), except we exclude $\Sigma_3$. So, 
\begin{equation*}
f(1,p-1)-18 \leq \frac{K(x+h)-K(x)}{h} \leq f(1,p-1)+18.
\end{equation*}

\noindent (iii) If $k_0=p-1$, we exclude $\Sigma_2$ and $\Sigma_3$. So, 
\begin{equation*}
f(1,p-1)-9 \leq \frac{K(x+h)-K(x)}{h} \leq f(1,p-1)+9.
\end{equation*}
This completes the preliminary analysis of the cases (i),(ii), and (iii). In any case,
\begin{equation*}
f(1,p-1)-18 \leq \frac{K(x+h)-K(x)}{h} \leq f(1,p-1)+18.
\end{equation*}

Next notice that as $h \to 0+$, $p \to \infty$. Therefore, it follows that  
\begin{equation*}
K_{+}^{'}(x)=\pm \infty \mbox{ iff } f(1,n) \to \pm \infty \mbox{ as } n\to \infty.
\end{equation*}

\noindent From Proposition~\ref{prop:K is odd}, we have 
\begin{equation*}
K_{-}^{'}(x)=K_{+}^{'}(1-x).
\end{equation*}
Notice that $f_x(1,n)\to \pm \infty \iff f_{1-x}(1,n)\to \pm \infty \mbox{ as } n\to \infty$. 
Therefore, the left-side derivative follows 
\begin{equation*}
K_{-}^{'}(x)=\pm \infty \mbox{ iff } f(1,n) \to \pm \infty \mbox{ as } n\to \infty.
\end{equation*}
This completes the proof.
\end{proof}

Define 
\begin{equation*}
p_1(x):=\lim_{n \to \infty} \frac{I_1(n)}{n},
\end{equation*}
assuming the limit exists, where $I_1(n)=\#\{j \leq n: \varepsilon_j=1\}$. 
Note that $p_1(x)$ is the frequency of the digit $1$ in the ternary expansion of $x$. 
We obtain the following immediate consequence of the main theorem. 

\begin{corollary}
\label{coro-1}
Assume $p_1(x)$ exists. Then
\begin{equation*}
\begin{cases}
K'(x)=+\infty, & \mbox{if }\quad p_1(x)< 1/3,\\
K'(x)=-\infty, & \mbox{if }\quad p_1(x)> 1/3. 
\end{cases}
\end{equation*}
\end{corollary}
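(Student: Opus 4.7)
The plan is to deduce the corollary directly from Theorem~\ref{main-theorem} by translating the hypothesis on the digit frequency into the divergence condition $n-3I_1(n) \to \pm \infty$. The key algebraic identity is
\begin{equation*}
n - 3I_1(n) = n\left(1 - 3\cdot\frac{I_1(n)}{n}\right),
\end{equation*}
which factors the quantity controlled by the main theorem into a diverging factor $n$ and a factor whose sign (for large $n$) is determined by $p_1(x)$.

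First I would handle the case $p_1(x) < 1/3$. Set $\delta := \tfrac12(1 - 3p_1(x)) > 0$. By definition of $p_1(x)$, there exists $N$ such that for all $n \geq N$ we have $|I_1(n)/n - p_1(x)| < \delta/3$, hence $1 - 3 I_1(n)/n > \delta > 0$. Thus $n - 3I_1(n) > \delta n \to +\infty$, and Theorem~\ref{main-theorem} yields $K'(x) = +\infty$. The case $p_1(x) > 1/3$ is symmetric: choose $\delta := \tfrac12(3p_1(x)-1) > 0$ and obtain $n - 3I_1(n) < -\delta n \to -\infty$, so Theorem~\ref{main-theorem} gives $K'(x) = -\infty$.

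There is no real obstacle here; the statement is a direct asymptotic translation. The only thing to be a bit careful about is that the convergence $I_1(n)/n \to p_1(x)$ by itself does not immediately give divergence of $n - 3I_1(n)$ (the boundary case $p_1(x) = 1/3$ is genuinely undetermined, which is why it is excluded), so one must explicitly extract a positive lower bound on $|1 - 3I_1(n)/n|$ before multiplying by $n$. With that two-line estimate in place, the corollary follows at once.
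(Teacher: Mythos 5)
Your proof is correct and is precisely the argument the paper intends: the corollary is stated there as an immediate consequence of Theorem~\ref{main-theorem}, and your factorization $n-3I_1(n)=n\left(1-3\,I_1(n)/n\right)$ together with the extraction of a uniform bound $\left|1-3\,I_1(n)/n\right|>\delta$ for large $n$ is exactly the routine verification being left to the reader. Nothing further is needed.
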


\noindent It is well-known that almost all numbers (in the sense of Lebesgue measure) are {\it normal} to all bases; that is, they have base-$m$ expansions containing equal proportions of the digits $0, 1, \dots, m-1$ for all $m$. Notice that $p_1(x)=1/3$ is the boundary case in Corollary ~\ref{coro-1}.   

\begin{corollary}
Let $F:=\{x \in [0,1]: K'(x)=\pm \infty\}$. The Lebesgue measure of $F$ is $0$ while the Hausdorff dimension of $F$ is $1$. 
\end{corollary}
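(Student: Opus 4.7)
The plan is to combine Theorem~\ref{main-theorem} with a one-dimensional random-walk argument for the measure statement, and with Eggleston's theorem on digit-frequency sets for the dimension statement. Throughout, set $S_n:=n-3I_1(n)$ so that, by Theorem~\ref{main-theorem}, $F=\{x\in[0,1]:S_n\to\pm\infty\}$.

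For the Lebesgue measure part I would use that under Lebesgue measure on $[0,1]$, the ternary digits $\varepsilon_1,\varepsilon_2,\ldots$ are i.i.d.\ uniform on $\{0,1,2\}$. Writing $X_j:=1-3\,\mathbf{1}[\varepsilon_j=1]$, one has $S_n=X_1+\cdots+X_n$, a centered random walk on $\ZZ$ with step distribution $P(X_j=1)=2/3$, $P(X_j=-2)=1/3$ and finite variance. A mean-zero random walk on $\ZZ$ with finite variance is recurrent (Chung--Fuchs), and equivalently the law of the iterated logarithm gives
$$\limsup_{n\to\infty} S_n=+\infty,\qquad \liminf_{n\to\infty} S_n=-\infty\quad\text{almost surely.}$$
In particular $S_n\not\to\pm\infty$ a.s., so $F$ has Lebesgue measure zero.

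For the Hausdorff dimension part, the plan is to exhibit, for each $q\in(0,1/3)$, a large subset $E_q\subseteq F$. Take
$$E_q:=\bigl\{x\in[0,1]: \text{the ternary digits of }x\text{ have asymptotic frequencies }\tfrac{1-q}{2},\,q,\,\tfrac{1-q}{2}\bigr\}.$$
For $x\in E_q$ with $q<1/3$, the identity $S_n=n(1-3\,I_1(n)/n)$ forces $S_n\to+\infty$, so $E_q\subseteq F$. By the classical Eggleston--Besicovitch theorem,
$$\dim_H E_q=\frac{h(q)}{\log 3},\qquad h(q):=-q\log q-(1-q)\log\tfrac{1-q}{2}.$$
Letting $q\to 1/3^-$ drives $h(q)\to\log 3$, yielding $\dim_H F\geq 1$; the reverse inequality is trivial since $F\subseteq[0,1]$.

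The measure-theoretic half is the only genuinely subtle step: the strong law of large numbers alone gives only $I_1(n)/n\to 1/3$ a.s., which is \emph{not} enough to preclude $S_n\to\pm\infty$ (one could imagine $S_n$ behaving like $\sqrt{n}$). Ruling out such persistent drift is precisely what recurrence (or the LIL) provides. Once that step is in place, the dimension half is a routine appeal to standard multifractal-formalism results on digit-frequency sets, and no further machinery is needed.
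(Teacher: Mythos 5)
Your proposal is correct and follows essentially the same route as the paper: the measure-zero half via the recurrence (oscillation) of the mean-zero random walk $n-3I_1(n)$ under Lebesgue measure, and the dimension half via Eggleston--Besicovitch digit-frequency sets with digit-$1$ frequency tending to $1/3$ from below (the paper cites this as Proposition 10.1 in Falconer). Your write-up merely makes explicit (via Chung--Fuchs or the LIL) the oscillation step that the paper asserts without proof.
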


\begin{proof}
Let $I_1(n;x):=\#\{j\leq n: \varepsilon_j(x)=1\}$. We may view $I_1(n;x)$ as a random variable on the probability space $[0,1]$ equipped with Borel sets and Lebesgue measure. It is easy to see that the random sequence $(n-3I_1(n;x))_{n\in\mathbb{N}}$ is a mean zero random walk, so it oscillates infinitely often between positive and negative values with probability one. Thus, by Theorem \ref{main-theorem}, the Lebesgue measure of $F$ is $0$. 

\medskip
Let $p_0, p_1, p_2$ be real numbers of summing to $1$, so that $0<p_i<1$ and $\sum_{i=0}^{2}p_i=1$. Let $F(p_0, p_1, p_2)$ be the set of numbers $x \in [0,1)$ with ternary expansions containing the digits $0,1,2$ in proportions $p_0, p_1, p_2$ respectively. Let $(\alpha_n)$ be an increasing sequence converging to $1/3$ as $n \to \infty$. By Proposition 10.1 in Falconer's book~\cite{Falconer}, we have 
\begin{equation*}
\dim_{H}F\left(\frac{1-\alpha_n}{2}, \alpha_n, \frac{1-\alpha_n}{2}\right)=\frac{-(1-\alpha_n)\ln((1-\alpha_n)/2)-\alpha_n\ln \alpha_n}{\ln 3}, 
\end{equation*}
which converges to $1$ as $n \to \infty$. 

Since $\bigcup_{n} F(\frac{1-\alpha_n}{2}, \alpha_n, \frac{1-\alpha_n}{2}) \subset F$, the Hausdorff dimension of $F$ is 1. 
\end{proof}

\section*{Acknowledgment}
This undergraduate research was done mainly during summer 2021. We would like to thank Prof.~P.~Allaart, Prof.~J.~Iaia and Taylor Jones for their helpful comments and suggestions in preparing this paper.


\begin{thebibliography}{23}

\bibitem{Allaart}
{P. C. Allaart}, The infinite derivatives of Okamoto's self-affine functions: an application of $beta$-expansions, {\em J. Fractal Geom.} {\bf 3} (2016), 1-31.

\bibitem{AK}
{P. C. Allaart} and {K. Kawamura}, The improper infinite derivatives of Takagi's nowhere-differentiable function, {\em J. Math. Anal. Appl.} (2) {\bf 372} (2010), 656-665.

\bibitem{Begle-Ayres}
{\sc E. G. Begle} and {\sc W. L. Ayres}, On Hildebrandt's example of a function without a finite derivative, {\em Amer. Math. Monthly} {\bf 43} (1936), 294--296.

\bibitem{Billingsley}
{\sc P. Billingsley}, Van Der Waerden's Continuous Nowhere Differentiable Function, {\em Amer. Math. Monthly} {\bf 89} (1982), 691.

\bibitem{Bourbaki} 
{N. Bourbaki}, {\em Functions of a real variable}, Translated from the 1976 French original by Philip Spain, Springer, Berlin, 2004.

\bibitem{Rham}
{G.~de Rham}\@, {\it Sur quelques courbes d\'efinies par des \'equations fonctionnelles.}, Rend.\ Sem.\ Mat.\ Torino, {\bf 16}, (1957), 101-113.

\bibitem{Falconer}
{K.~J.~Falconer}, {\it Fractal Geometry. Mathematical Foundations and Applications}, 2nd Edition, Wiley (2003)

\bibitem{Katsuura} 
{H. Katsuura}, Continuous nowhere-differentiable functions - an application of contraction mappings, {\em Amer. Math. Monthly} (5) {\bf 98} (1991), 411--416.

\bibitem{Kawamura}
{K.~Kawamura}, On the classification of self-similar sets determined by two contractions on the plane, {\em J.\ Math.\ Kyoto Univ.}\ {\bf 42}, (2002), pp.~255-286.

\bibitem{Kobayashi} 
{K. Kobayashi}, On the critical case of Okamoto's continuous non-differentiable functions, {\em Proc. Japan Acad. Ser. A Math. Sci.} (8) {\bf 85} (2009), 101-104.

\bibitem{Kruppel}
{M. Kr\"uppel}, On the improper derivatives of Takagi's continuous nowhere differentiable function, {\em Rostock. Math. Kolloq.} {\bf 65} (2010), 3-13.

\bibitem{Okada-Sekiguchi-Shiota}
{T. Okada, T. Sekiguchi, Y. Shiota}, Applications of binomial measures to power sums of digital sums, {\em J. Number Theory} {\bf 52}, no.2 (1995), 256–266.

\bibitem{Okamoto} 
{H. Okamoto}, A remark on continuous, nowhere differentiable functions, {\em Proc. Japan Acad. Ser. A Math. Sci.} {\bf 81}, no.3 (2005), 47--50.

\bibitem{OkaWunsch} 
{H. Okamoto} and {M. Wunsch}, A geometric construction of continuous, strictly increasing singular functions, {\em Proc. Japan Acad. Ser. A Math. Sci.} (7) {\bf 83} (2007), 114-118.

\bibitem{Perkins} 
{F. W. Perkins}, An elementary example of a continuous non-differentiable function, {\em Amer. Math. Monthly} {\bf 34} (1927), 476-478.

\bibitem{Shiota}
{T.~Sekiguchi} and {Y.~Shiota}, A generalization of Hata-Yamaguti's results on the Takagi function,  {\em Japan J.\ Appl.\ Math.} {\bf 8}, (1991), 203-219.

\bibitem{Takagi} 
{T. Takagi}, A simple example of the continuous function without derivative, {\em Phys.-Math. Soc. Japan} {\bf 1} (1903), 176-177. {\em The Collected Papers of Teiji Takagi}, S. Kuroda, Ed., Iwanami (1973), 5-6. 

\bibitem{Tasaki}
{S.~Tasaki}, {I.~Antoniou} and {Z.~Suchanecki}, Deterministic Diffusion, De Rham Equation and Fractal Eigenvectors, {\em Physics Letter A} {\bf  179} (1993), 97-102

\bibitem{Yamaguti-Hata1}
{M.~Yamaguti} and {M.~Hata}, Weierstrass's Function and Chaos, {\em Hokkaido. Math. J.}, {\bf 12}, (1983), pp.333-342.

\bibitem{Hata-Yamaguti2}
{M.~Hata} and {M.~Yamaguti}, Takagi Function and Its Generalization, {\em Japan J. Appl. Math.}, {\bf 1}, (1984), pp.~183-199.

\end{thebibliography}
\end{document}